\newtheorem{theorem}{Theorem}[section]
\newtheorem{lem}[theorem]{Lemma}
\newtheorem{cor}[theorem]{Corollary}
\theoremstyle{definition}
\theoremstyle{remark}
\numberwithin{equation}{section}
\begin{document}

\newcommand{\spacing}[1]{\renewcommand{\baselinestretch}{#1}\large\normalsize}
\spacing{1.14}

\title{Riemannian Geometry of Two Families of Tangent Lie Groups }

\author {F. Asgari}

\address{Department of Mathematics\\ Faculty of  Sciences\\ University of Isfahan\\ Isfahan\\ 81746-73441-Iran.} \email{farhad\_13812003@yahoo.com}

\author {H. R. Salimi Moghaddam}

\address{Department of Mathematics\\ Faculty of  Sciences\\ University of Isfahan\\ Isfahan\\ 81746-73441-Iran.} \email{hr.salimi@sci.ui.ac.ir and salimi.moghaddam@gmail.com}

\keywords{ Left invariant Riemannian metric, tangent Lie group, complete and vertical lifts, sectional and Ricci curvatures\\
AMS 2010 Mathematics Subject Classification: 53B21, 22E60, 22E15.}


\begin{abstract}
Using vertical and complete lifts, any left invariant Riemannian metric on a Lie group induces a left invariant Riemannian metric on the tangent Lie group. In the present article we study the Riemannian geometry of tangent bundle of two families of Lie groups. The first one is the family of special Lie groups considered by J. Milnor and the second one is the class of Lie groups with one-dimensional commutator groups. The Levi-Civita connection, sectional and Ricci curvatures have been investigated.
\end{abstract}

\maketitle


\section{\textbf{Introduction}}\label{introduction}
Suppose that $M$ is a real $m-$dimensional differentiable manifold. For any vector field $X$ on $M$, the infinitesimal generator of the one-parameter group of diffeomorphisms $\psi_t(y):=y+tX(x), \forall y\in T_xM$, is a vector field on $TM$ which is called the vertical lift of $X$ and is denoted by $X^v$. Let $\phi_t$ be the (local) one-parameter group of diffeomophisms defined by $X$ on $M$. The infinitesimal generator of $T\phi_t:TM\longrightarrow TM$, is called the complete lift of $X$ and is denoted by $X^c$ (see \cite{Hind, MoFeLoMaRu, YaIs}). For any two vector fields $X, Y$ on $M$ the Lie bracket of vertical and complete lifts of them satisfy the following relations (see \cite{MoFeLoMaRu, YaIs}):
\begin{equation}\label{Lie bracket}
    [X^v,Y^v]=0  \ \ \ \ , \ \ \ \ [X^c,Y^c]=[X,Y]^c \ \ \ \ , \ \ \ \ [X^v,Y^c]=[X,Y]^v.
\end{equation}
Let $G$ be a Lie group with multiplication map $\mu$ and inversion map $\iota$. Then $TG$ is also a Lie group with multiplication $T\mu$ and inversion map $T\iota$, where $T\mu$ and $T\iota$ denote the tangent maps of $\mu$ and $\iota$ respectively.\\
More precisely, $TG$ is a Lie group with the multiplication
\begin{equation}\label{multiplication of TG}
    T\mu(v,w)=T_hl_gw+T_gr_hv, \ \ \ \ \forall g,h\in G, v\in T_gG, w\in T_hG,
\end{equation}
where $l_g$ and $r_h$ denote the left and right translations respectively.\\
In \cite{Hind}, it is shown that the complete and vertical lifts of left invariant vector fields of $G$ are left invariant vector fields of $TG$. Therefore for any left invariant Riemannian metric $g$ on $G$ we can define a left invariant Riemannian metric $\widetilde{g}$ on $TG$ as follows:
\begin{eqnarray}
  \widetilde{g}(X^c,Y^c) &:=& g(X,Y)=:\widetilde{g}(X^v,Y^v), \\
  \widetilde{g}(X^c,Y^v) &=& 0, \nonumber
\end{eqnarray}
where $X, Y$ are arbitrary left invariant vector fields on $G$.\\
In \cite{Mi}, Milnor considered a special class of solvable Lie groups (we denote this family by $\mathcal{G}_1$). By definition a non-abelian Lie group $G$ belongs to $\mathcal{G}_1$ (or it is called a special Lie group) if for any $x, y\in \frak{g}$, $[x,y]$ is a linear combination of $x$ and $y$, where $\frak{g}$ denotes the Lie algebra of $G$. Milnor showed that if $\frak{g}$ has this property then there exist a commutative ideal $\frak{u}$ of codimension $1$ and an element $b\in\frak{g}$ such that $[b,x]=x$ for any $x\in\frak{u}$.\\
In \cite{Mi} it has been shown that if $G\in\mathcal{G}_1$, then every left-invariant Riemannian metric on $G$ has negative constant sectional curvature. This result generalized to left-invariant Lorentz metric by K. Nomizu (see \cite{Nomizu}). He proved that every left-invariant Lorentz metric on a special Lie group $G$ is of constant sectional curvature depending on choice of left-invariant Lorentz metric may be positive, negative, or zero.\\
Another interesting family of Lie groups is the class of Lie groups with one-dimensional commutator groups (we denote it by $\mathcal{G}_2$). The Ricci curvatures of such Lie groups, equipped with left invariant Riemannian metrics, have been studied by V. Kaiser (see \cite{Kaiser}).\\
In the previous work \cite{Asgri-Salimi}, we have studied the Riemannian geometry of the tangent Lie group $(TG,\widetilde{g})$. In this paper we study the sectional and Ricci curvatures of the manifolds $(TG,\widetilde{g})$ where $G$ belongs to the families $\mathcal{G}_1$ and $\mathcal{G}_2$ equipped with any left invariant Riemannian metric $g$.

\section{\textbf{Sectional and Ricci Curvatures of $TG$ Where $G$ Belongs to $\mathcal{G}_1$}}
Suppose that $(G,g)$ is an arbitrary finite dimensional Riemannian Lie group. In \cite{Asgri-Salimi}, we showed that for any $x,y\in\frak{g}$ the Levi-Civita connection of the lifted left invariant metric $\widetilde{g}$ on $TG$ can be computed by the following equations:
\begin{eqnarray}\label{tangent bundle connection}
&&\widetilde{\nabla}_{x^c}y^c=(\nabla_xy)^c,\nonumber \\
&&\widetilde{\nabla}_{x^v}y^v=(\nabla_xy-\frac{1}{2}[x,y])^c,\nonumber \\
&&\widetilde{\nabla}_{x^c}y^v=(\nabla_xy+\frac{1}{2}\textsl{ad}_y^\ast x)^v,\\
&&\widetilde{\nabla}_{x^v}y^c=(\nabla_xy+\frac{1}{2}\textsl{ad}_x^\ast y)^v.\nonumber
\end{eqnarray}
Let $G\in\mathcal{G}_1$ be a Lie group equipped with an arbitrary left invariant Riemannian metric $g$. Suppose that  $\frak{u}$ is its commutative ideal of codimension $1$. Similar to \cite{Nomizu}, it can be shown that there exists a vector $b\in\frak{g}$ orthogonal to $\frak{u}$ (with respect to the inner product induced by $g$ on $\frak{g}$) such that $[b,x]=x$ for any $x\in\frak{u}$. Then we can see for every $x,y \in \frak{u}$ we have:\\
\begin{eqnarray}
\nabla_bb=0, \ \ \ \ \nabla_bx=0, \ \ \ \ \nabla_xy=\frac{g(x,y)}{\lambda}b, \ \ \ \ \nabla_xb=-x,
\end{eqnarray}
where $ \lambda=g(b,b)$.\\
Now for the Levi-Civita connection of $(TG,\widetilde{g})$ we have the following lemma:

\begin{lem}\label{lemma 1}
Let $G\in\mathcal{G}_1$ be a special Lie group, $\frak{g}$ be its Lie algebra and $g$ be any left-invariant Riemannian metric on $G$. Suppose that $\widetilde{g}$ is the left-invariant Riemannian metric on $TG$ induced by $g$ as above, and $\widetilde{\nabla}$ is the Levi-Civita connection of $(TG,\widetilde{g})$. Then for any $x,y\in\frak{u}$ we have:
\begin{eqnarray}
  &&\widetilde{\nabla}_{b^c}b^c=\widetilde{\nabla}_{b^c}b^v=\widetilde{\nabla}_{b^v}b^c=\widetilde{\nabla}_{b^v}b^v=\widetilde{\nabla}_{b^c}x^c=\widetilde{\nabla}_{b^c}x^v=0, \nonumber \\
  &&\widetilde{\nabla}_{x^c}b^c=-x^c, \ \ \ \ \widetilde{\nabla}_{b^v}x^c=\frac{1}{2}x^v, \ \ \ \ \widetilde{\nabla}_{x^v}b^c=-x^v, \ \ \ \ \widetilde{\nabla}_{x^v}b^v=\widetilde{\nabla}_{b^v}x^v=-\frac{1}{2}x^c, \\
  &&\widetilde{\nabla}_{x^c}b^v=-\frac{1}{2}x^v, \ \ \ \ \widetilde{\nabla}_{x^c}y^c=\widetilde{\nabla}_{x^v}y^v=\frac{g(x,y)}{\lambda}b^c, \ \ \ \ \widetilde{\nabla}_{x^c}y^v=\widetilde{\nabla}_{x^v}y^c=\frac{g(x,y)}{2\lambda}b^v.\nonumber
\end{eqnarray}

where $\lambda:=g(b,b)=\|b\|^2$ and $b\in\frak{g}$ is an element such that $[b,x]=x$ for any $x\in\frak{u}$.
\end{lem}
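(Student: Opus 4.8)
The plan is to derive every one of the listed identities by feeding the known Levi-Civita data of $(G,g)$ — namely $\nabla_bb=0$, $\nabla_bx=0$, $\nabla_xy=\frac{g(x,y)}{\lambda}b$ and $\nabla_xb=-x$ for $x,y\in\frak{u}$ — into the four master formulas \eqref{tangent bundle connection}. Every Lie bracket and covariant derivative occurring on their right-hand sides is thereby already available, so the only genuinely new objects I have to produce are the metric adjoints $\textsl{ad}_z^\ast w$ that enter the two mixed cases $\widetilde{\nabla}_{x^c}y^v=(\nabla_xy+\frac{1}{2}\textsl{ad}_y^\ast x)^v$ and $\widetilde{\nabla}_{x^v}y^c=(\nabla_xy+\frac{1}{2}\textsl{ad}_x^\ast y)^v$, evaluated on the generators $b$ and $x,y\in\frak{u}$. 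Once those are recorded, each stated equality reduces to a single substitution.

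Accordingly, the central step is to compute $\textsl{ad}_z^\ast$ from its defining relation $g(\textsl{ad}_z^\ast w,u)=g(w,[z,u])$, testing against $u=b$ and against $u\in\frak{u}$ separately and using the bracket relations $[b,x]=x$, $[x,y]=0$ together with the orthogonality $b\perp\frak{u}$ and $\|b\|^2=\lambda$. I expect this to yield
\[
\textsl{ad}_b^\ast b=0,\qquad \textsl{ad}_b^\ast x=x,\qquad \textsl{ad}_x^\ast b=0,\qquad \textsl{ad}_x^\ast y=-\frac{g(x,y)}{\lambda}b,
\]
with $\textsl{ad}_y^\ast x=\textsl{ad}_x^\ast y$ by symmetry of $g$. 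The last identity, for instance, is forced by $g(\textsl{ad}_x^\ast y,b)=g(y,[x,b])=-g(x,y)$ while $g(\textsl{ad}_x^\ast y,u)=g(y,[x,u])=0$ for $u\in\frak{u}$, which pins $\textsl{ad}_x^\ast y$ down to a multiple of $b$ and fixes the coefficient using $\|b\|^2=\lambda$.

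With these four adjoints in hand the verification is case by case. The purely $b$-type derivatives vanish from $\nabla_bb=0$, $[b,b]=0$ and $\textsl{ad}_b^\ast b=0$; the terms $\widetilde{\nabla}_{b^c}x^c$ and $\widetilde{\nabla}_{b^c}x^v$ vanish from $\nabla_bx=0$ and $\textsl{ad}_x^\ast b=0$; and the factors $\frac{1}{2}$ in the remaining $b$-and-$x$ derivatives track exactly the $\frac{1}{2}\textsl{ad}_b^\ast x=\frac{1}{2}x$ contribution and the $-\frac{1}{2}[x,b]=\frac{1}{2}x$ term. The two $y$-dependent mixed derivatives both collapse to $\frac{g(x,y)}{2\lambda}b^v$ precisely because the value $\nabla_xy=\frac{g(x,y)}{\lambda}b$ is corrected by the half-adjoint $-\frac{g(x,y)}{2\lambda}b$. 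I do not anticipate a conceptual obstacle; the real work is bookkeeping — carrying the correct lift ($c$ or $v$) on each summand and invoking $b\perp\frak{u}$ consistently in every adjoint evaluation, since it is this orthogonality alone that collapses the adjoints to the clean closed forms above.
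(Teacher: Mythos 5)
Your proposal is correct and follows essentially the same route as the paper: compute the relevant coadjoint operators from $g(\textsl{ad}_z^\ast w,u)=g(w,[z,u])$ using $[b,x]=x$, $[x,y]=0$ and $b\perp\frak{u}$, then substitute into the four formulas \eqref{tangent bundle connection} together with the known $\nabla$ on $(G,g)$. In fact you record one adjoint the paper's proof leaves implicit, namely $\textsl{ad}_x^\ast y=-\frac{g(x,y)}{\lambda}b$, which is exactly what is needed to obtain $\widetilde{\nabla}_{x^c}y^v=\widetilde{\nabla}_{x^v}y^c=\frac{g(x,y)}{2\lambda}b^v$, so your write-up is, if anything, slightly more complete.
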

\begin{proof}

Direct computations show that for every $x\in\frak{u}$ we have:
\begin{eqnarray}
ad^*_bb=0, \ \ \ \ ad^*_bx=x, \ \ \ \ ad^*_xb=0.
\end{eqnarray}
The substitution of the above formulas in \ref{tangent bundle connection} completes the proof.
\end{proof}

The curvature tensor of a Riemannian manifold by definition is
\begin{eqnarray}
R(x,y)z=\nabla_x\nabla_yz-\nabla_y\nabla_xz-\nabla_{[x,y]}z.
\end{eqnarray}
The following lemma is a direct consequence of \ref{lemma 1} so the proof is omitted.

\begin{lem}\label{lemma 2}
If $\widetilde{R}$ denotes the curvature tensor of the Riemannian metric $\widetilde{g}$ then, with the assumptions of lemma \ref{lemma 1}, for any $x,y,z\in\frak{u}$ we have:
\begin{eqnarray}\label{curvature tensor}
   && \widetilde{R}(x^c,y^v)z^c=\widetilde{R}(x^c,y^c)z^v=\frac{1}{4\lambda}(g(x,z)y^v-g(y,z)x^v),\nonumber \\
   && \widetilde{R}(x^c,y^c)z^c=\frac{1}{\lambda}(g(x,z)y^c-g(y,z)x^c) \ \ \ , \ \ \widetilde{R}(x^c,y^v)z^v=\frac{1}{4\lambda}(g(x,z)y^c-4g(y,z)x^c),\nonumber \\
   && \widetilde{R}(x^v,y^v)z^c=\frac{1}{4\lambda}(g(x,z)y^c-g(y,z)x^c) \ \ \ , \ \ \widetilde{R}(x^v,y^v)z^v=\frac{1}{\lambda}(g(x,z)y^v-g(y,z)x^v), \\
   && \widetilde{R}(x^c,y^c)b^c=\widetilde{R}(x^c,y^c)b^v=\widetilde{R}(x^c,y^v)b^c=\widetilde{R}(x^c,y^v)b^v=\widetilde{R}(x^v,y^v)b^c=\widetilde{R}(x^v,y^v)b^v=0,\nonumber \\
   && \frac{4}{3}\widetilde{R}(x^c,b^v)y^c=2\widetilde{R}(x^c,b^c)y^v=2\widetilde{R}(x^v,b^c)y^c=-4\widetilde{R}(x^v,b^v)y^v=\frac{g(x,y)}{\lambda}b^v,\nonumber \\
   && \widetilde{R}(x^c,b^c)y^c=2\widetilde{R}(x^c,b^v)y^v=2\widetilde{R}(x^v,b^v)y^c=\widetilde{R}(x^v,b^c)y^v=\frac{g(x,y)}{\lambda}b^c.\nonumber
\end{eqnarray}
\end{lem}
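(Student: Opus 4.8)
The plan is to substitute the connection values of Lemma \ref{lemma 1} directly into the definition $\widetilde R(u,v)w=\widetilde\nabla_u\widetilde\nabla_v w-\widetilde\nabla_v\widetilde\nabla_u w-\widetilde\nabla_{[u,v]}w$ and read off each component. Because the arguments $x^c,x^v,b^c,b^v$ are left-invariant vector fields on $TG$ and every scalar occurring in Lemma \ref{lemma 1} (for instance $g(x,y)/\lambda$) is a genuine constant, differentiating never produces a coefficient term: each inner derivative $\widetilde\nabla_v w$ is a constant multiple of one of $x^c,x^v,b^c,b^v$, and applying $\widetilde\nabla_u$ to it is again read off from Lemma \ref{lemma 1}. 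The computation is therefore entirely algebraic.

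The first step is to record the brackets of the arguments. Combining \eqref{Lie bracket} with $[x,y]=0$ and $[b,x]=x$ for $x,y\in\frak{u}$ gives $[x^c,y^c]=[x^v,y^c]=[x^v,y^v]=0$, together with $[x^c,b^c]=-x^c$, $[x^v,b^c]=-x^v$ and $[b^v,x^c]=x^v$ (the remaining brackets follow by antisymmetry). These are precisely the brackets feeding the last term of the curvature formula.

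With this in hand I would run through the cases, organized by the lift types of the three arguments and by whether each slot lies in $\frak{u}$ or equals $b$. For the vanishing identities on the fourth line (third argument $b^c$ or $b^v$) the two double-derivative terms turn out to be equal, by the symmetry $g(x,y)=g(y,x)$, while the bracket term vanishes, so the three contributions cancel. For the nonzero components one simply composes two connection values; for example $\widetilde R(x^c,y^c)z^c$ uses $\widetilde\nabla_{y^c}z^c=\frac{g(y,z)}{\lambda}b^c$ and $\widetilde\nabla_{x^c}b^c=-x^c$ to yield $\frac{1}{\lambda}(g(x,z)y^c-g(y,z)x^c)$, while the chained identities on the last two lines reduce to multiples of $\frac{g(x,y)}{\lambda}b^c$ and $\frac{g(x,y)}{\lambda}b^v$ after invoking $[x^c,b^c]=-x^c$ and its analogues in the $\widetilde\nabla_{[u,v]}w$ term.

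There is no genuine obstacle beyond bookkeeping: the only delicate point is the mixing of complete and vertical lifts, which carries the factors $\tfrac12$ and $\tfrac14$ of Lemma \ref{lemma 1}, so that an intermediate result may switch type (complete $\leftrightarrow$ vertical) or land in a $b$-direction; one must then call on the corresponding mixed values such as $\widetilde\nabla_{b^v}x^v=-\tfrac12 x^c$ and $\widetilde\nabla_{x^v}y^c=\frac{g(x,y)}{2\lambda}b^v$. Faithfully tracking these fractions is exactly what produces the $\frac{1}{4\lambda}$ coefficients and the specific ratios ($\tfrac43$, $2$, $-4$) appearing in the last two lines.
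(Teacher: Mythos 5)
Your proposal is correct and follows exactly the route the paper intends: the paper omits the proof of Lemma \ref{lemma 2}, stating only that it is a direct consequence of Lemma \ref{lemma 1}, and your plan of substituting the connection values into $\widetilde R(u,v)w=\widetilde\nabla_u\widetilde\nabla_v w-\widetilde\nabla_v\widetilde\nabla_u w-\widetilde\nabla_{[u,v]}w$ together with the bracket relations $[x^c,b^c]=-x^c$, $[x^v,b^c]=-x^v$, $[b^v,x^c]=x^v$ is precisely that computation. Your sample cases (e.g.\ $\widetilde R(x^c,y^c)z^c$ and the cancellation for third argument $b^c$ or $b^v$) check out against the stated formulas.
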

Now by using the above lemma we can prove the following theorem about sectional curvature.
\begin{theorem}\label{theorem 1}
Consider the assumptions of lemma \ref{lemma 1} and let $\widetilde{K}$ denote the sectional curvature of the Riemannian metric $\widetilde{g}$, then we have:
\begin{eqnarray}
    &&\widetilde{K}(x^c,y^c)=\widetilde{K}(x^v,y^v)=\widetilde{K}(x^c,b^c)=\frac{4}{3}\widetilde{K}(x^c,b^v)=\widetilde{K}(x^v,b^c)=-4\widetilde{K}(x^v,b^v)=-\frac{1}{\lambda},\nonumber \\
    &&\widetilde{K}(x^c,y^v)=-\frac{1}{\lambda}+\frac{g(x,y)^2}{4\lambda g(x,x)g(y,y)}.
\end{eqnarray}
\end{theorem}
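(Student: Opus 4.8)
The plan is to compute each sectional curvature straight from the definition
\[
\widetilde{K}(u,w)=\frac{\widetilde{g}\big(\widetilde{R}(u,w)w,u\big)}{\widetilde{g}(u,u)\,\widetilde{g}(w,w)-\widetilde{g}(u,w)^2},
\]
substituting the curvature values of Lemma \ref{lemma 2} and using three features of $\widetilde{g}$: complete and vertical lifts span mutually orthogonal isometric copies of $\frak{g}$, so that $\widetilde{g}(X^c,Y^v)=0$ and $\widetilde{g}(X^c,Y^c)=\widetilde{g}(X^v,Y^v)=g(X,Y)$; in particular $\widetilde{g}(b^c,b^c)=\widetilde{g}(b^v,b^v)=\lambda$. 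Because $b\perp\frak{u}$ we have $g(x,b)=0$ for all $x\in\frak{u}$, so every two-plane appearing in the statement is spanned by $\widetilde{g}$-orthogonal vectors, and each denominator collapses either to $g(x,x)g(y,y)-g(x,y)^2$ (for the planes inside $\frak{u}$) or to $\lambda\,g(x,x)$ (for the planes involving $b$).

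I would first dispatch the two planes $(x^c,y^c)$ and $(x^v,y^v)$ together, since the relevant identities of Lemma \ref{lemma 2}, namely $\widetilde{R}(x^c,y^c)z^c=\tfrac1\lambda(g(x,z)y^c-g(y,z)x^c)$ and its verbatim vertical analogue, have the same shape. Taking $z=y$ and pairing with the first slot gives numerator $\tfrac1\lambda(g(x,y)^2-g(x,x)g(y,y))$, which is $-\tfrac1\lambda$ times the denominator; hence $\widetilde{K}(x^c,y^c)=\widetilde{K}(x^v,y^v)=-\tfrac1\lambda$ for every linearly independent $x,y\in\frak{u}$, with no normalization needed. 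The genuinely non-constant plane $(x^c,y^v)$ is handled the same way from $\widetilde{R}(x^c,y^v)z^v=\tfrac1{4\lambda}(g(x,z)y^c-4g(y,z)x^c)$: with $z=y$ the numerator is $\tfrac1{4\lambda}(g(x,y)^2-4g(x,x)g(y,y))$, and dividing by $g(x,x)g(y,y)$ yields the advertised $-\tfrac1\lambda+\tfrac{g(x,y)^2}{4\lambda g(x,x)g(y,y)}$.

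The four planes built from $b$ require the single non-mechanical step of the proof, and are the part needing care. Lemma \ref{lemma 2} tabulates the curvature $\widetilde{R}(\,\cdot\,,b^{\bullet})y^{\bullet}$ only for $y\in\frak{u}$, so the quantity $\widetilde{R}(x^{\bullet},b^{\bullet})b^{\bullet}$ demanded by the curvature quotient is not recorded directly. I would remove this obstruction with the pair symmetry $\widetilde{g}(\widetilde{R}(u,w)w,u)=\widetilde{g}(\widetilde{R}(w,u)u,w)$, which moves $b$ out of the last slot and, after the antisymmetry sign, reduces the numerator to one of the listed entries $\widetilde{R}(x^{\bullet},b^{\bullet})x^{\bullet}$ (obtained by setting $y=x$). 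For example,
\[
\widetilde{g}\big(\widetilde{R}(x^c,b^c)b^c,x^c\big)=\widetilde{g}\big(\widetilde{R}(b^c,x^c)x^c,b^c\big)=-\widetilde{g}\big(\widetilde{R}(x^c,b^c)x^c,b^c\big)=-g(x,x),
\]
so $\widetilde{K}(x^c,b^c)=-1/\lambda$, and the remaining planes $(x^v,b^c)$, $(x^c,b^v)$, $(x^v,b^v)$ go through identically.

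The only thing left to watch is bookkeeping of the coefficients. The entries $\widetilde{R}(x^c,b^v)x^c$ and $\widetilde{R}(x^v,b^v)x^v$ in Lemma \ref{lemma 2} carry the coefficients $\tfrac34$ and $-\tfrac14$, which are exactly cancelled by the normalizing factors $\tfrac43$ and $-4$ attached to $\widetilde{K}(x^c,b^v)$ and $\widetilde{K}(x^v,b^v)$ in the statement, producing the common value $-\tfrac1\lambda$. Beyond this sign-and-factor tracking there is no real obstacle: the entire argument is a substitution of Lemma \ref{lemma 2} into the sectional-curvature quotient, the one conceptual move being the symmetry that trades the unavailable $\widetilde{R}(x^{\bullet},b^{\bullet})b^{\bullet}$ for the available $\widetilde{R}(x^{\bullet},b^{\bullet})x^{\bullet}$.
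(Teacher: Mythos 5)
Your proposal is correct and follows exactly the route the paper intends: its proof of Theorem \ref{theorem 1} is the one-line remark that the result follows by substituting the relations of Lemma \ref{lemma 2} into the sectional curvature formula, and your computation is simply the worked-out version of that substitution (your use of the pair symmetry to convert $\widetilde{R}(x^{\bullet},b^{\bullet})b^{\bullet}$ into the tabulated $\widetilde{R}(x^{\bullet},b^{\bullet})x^{\bullet}$ is the natural way to carry it out, and your coefficient bookkeeping checks out).
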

\begin{proof}
It is a direct consequence of substituting of the relations \ref{curvature tensor} in the sectional curvature formula.
\end{proof}
\begin{cor}
Although the special Lie group $(G,g)$ has constant negative sectional curvature but $(TG,\widetilde{g})$ obtains positive, negative and zero sectional curvatures.
\end{cor}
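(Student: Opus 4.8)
The plan is to verify the three sign behaviours one at a time, reading the positive and negative cases straight off Theorem~\ref{theorem 1} and producing the vanishing case by a continuity argument. First I would note that $\lambda=g(b,b)=\|b\|^2>0$, so the numbers $-1/\lambda$ and $1/(4\lambda)$ occurring in Theorem~\ref{theorem 1} are genuinely of opposite sign. Negative curvature is then witnessed by the plane $\mathrm{span}\{x^c,y^c\}$, for which $\widetilde{K}(x^c,y^c)=-1/\lambda<0$, while positive curvature is witnessed by $\mathrm{span}\{x^v,b^v\}$: the relation $-4\widetilde{K}(x^v,b^v)=-1/\lambda$ gives $\widetilde{K}(x^v,b^v)=1/(4\lambda)>0$.

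The only real work is exhibiting a flat $2$-plane, since none of the distinguished planes of Theorem~\ref{theorem 1} has zero curvature. The idea is to interpolate between a negatively curved plane and a positively curved one. Fix orthonormal vectors $x,y\in\frak{u}$, put $\hat{b}:=b/\sqrt{\lambda}$, and consider
\[
P(t)=\mathrm{span}\bigl\{\,x^v,\ \cos t\,y^v+\sin t\,\hat{b}^v\,\bigr\},\qquad t\in[0,\tfrac{\pi}{2}].
\]
Because $\widetilde{g}(x^v,y^v)=g(x,y)=0$ and $\widetilde{g}(x^v,b^v)=g(x,b)=0$ (as $b\perp\frak{u}$), the two spanning vectors are orthonormal for every $t$, so $P(t)$ is a genuine $2$-plane depending continuously on $t$. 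Expanding $\widetilde{R}\bigl(x^v,\cos t\,y^v+\sin t\,\hat{b}^v\bigr)\bigl(\cos t\,y^v+\sin t\,\hat{b}^v\bigr)$ by Lemma~\ref{lemma 2}, the two cross terms drop out: $\widetilde{R}(x^v,y^v)b^v=0$, and $\widetilde{R}(x^v,\hat b^v)y^v$ is a multiple of $\widetilde{R}(x^v,b^v)y^v=-\tfrac{g(x,y)}{4\lambda}b^v=0$ since $g(x,y)=0$. Taking the inner product with $x^v$ therefore leaves only the diagonal contributions, which are exactly the known sectional curvatures $\widetilde{K}(x^v,y^v)=-1/\lambda$ and $\widetilde{K}(x^v,\hat{b}^v)=1/(4\lambda)$, so
\[
\widetilde{K}\bigl(P(t)\bigr)=\frac{1}{4\lambda}\bigl(\sin^2 t-4\cos^2 t\bigr).
\]
This is continuous in $t$, equals $-1/\lambda$ at $t=0$ and $1/(4\lambda)$ at $t=\pi/2$, so by the intermediate value theorem it vanishes at $t_0=\arctan 2\in(0,\tfrac{\pi}{2})$; hence $P(t_0)$ is a flat plane in $(TG,\widetilde{g})$.

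I expect the flat-plane step to be the only genuine obstacle: the zero curvature does not occur along any of the coordinate directions catalogued in Theorem~\ref{theorem 1}, so one is forced to leave them and argue by continuity. Abstractly this is just the observation that $\widetilde{K}$ is a continuous function on the connected Grassmannian of $2$-planes which takes both signs and therefore must take the value $0$; the explicit family $P(t)$ simply makes that intermediate value argument concrete and locates the flat plane. Everything else reduces to bookkeeping with Lemma~\ref{lemma 2}, most of whose terms vanish under the orthogonality $g(x,y)=0$.
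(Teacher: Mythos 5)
Your proposal is correct, and it is actually more complete than what the paper offers: the corollary is stated with no proof at all, implicitly as a by-product of Theorem~\ref{theorem 1}. Reading the negative value $-1/\lambda$ off $\widetilde{K}(x^c,y^c)$ and the positive value $1/(4\lambda)$ off $-4\widetilde{K}(x^v,b^v)=-1/\lambda$ is exactly what the paper intends for those two signs. Your key observation --- that \emph{none} of the planes catalogued in Theorem~\ref{theorem 1} is flat, since by Cauchy--Schwarz $\widetilde{K}(x^c,y^v)=-\tfrac{1}{\lambda}+\tfrac{g(x,y)^2}{4\lambda g(x,x)g(y,y)}\le-\tfrac{3}{4\lambda}<0$ and all the remaining values are $\pm 1/\lambda$, $-3/(4\lambda)$ or $1/(4\lambda)$ --- is accurate, so the ``zero'' claim genuinely requires leaving the coordinate planes. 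Your interpolating family $P(t)=\mathrm{span}\{x^v,\cos t\,y^v+\sin t\,\hat b^v\}$ handles this cleanly: the orthonormality check is right, the two cross terms $\widetilde{R}(x^v,y^v)b^v$ and $\widetilde{R}(x^v,b^v)y^v$ do vanish by Lemma~\ref{lemma 2} (the latter because $g(x,y)=0$), and the resulting formula $\widetilde{K}(P(t))=\tfrac{1}{4\lambda}(\sin^2t-4\cos^2t)$ vanishes at $t_0=\arctan 2$. What your argument buys over the paper's tacit one is an explicit flat plane and a rigorous justification of the one part of the corollary that does not follow by inspection of Theorem~\ref{theorem 1}; the abstract Grassmannian intermediate-value remark at the end is the honest summary of why some such construction is unavoidable.
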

Let $\{u_1,\cdots,u_n\}$ be an orthonormal basis for the Lie algebra $\frak{g}$, with respect to the inner product induced on $\frak{g}$ by $g$. Then the Ricci tensor is defined by
\begin{eqnarray}
Ric(x,y)=\sum_{i=1}^n g(R(u_i,x)y,u_i).
\end{eqnarray}

The following theorem gives us the Ricci tensor of $(TG,\widetilde{g})$.

\begin{theorem}
Let $G$ be a $(n+1)-$dimensional special Lie group equipped with any left invariant Riemannian metric $g$. Suppose that $\frak{g}$ denotes the Lie algebra of $G$ and, $\frak{u}$ and $b$ are as lemma \ref{lemma 1} such that $\{u_1,\cdots, u_n, b\}$ is an orthonormal basis of $\frak{g}$. Then for Ricci tensor of the manifold $(TG,\widetilde{g})$ we have:
\begin{eqnarray}
  && \widetilde{Ric}(x^c,y^c)=-(\frac{7}{4}+\frac{2n}{\lambda})g(x,y)+\frac{5}{4\lambda}\sum_{i=1}^ng(x,u_i)g(u_i,y),\nonumber \\
  && \widetilde{Ric}(x^v,y^v)=-(\frac{3}{4}+\frac{2n}{\lambda})g(x,y)+\frac{5}{4\lambda}\sum_{i=1}^ng(x,u_i)g(u_i,y), \\
  && \widetilde{Ric}(x^c,b^c)=\widetilde{Ric}(x^c,b^v)=\widetilde{Ric}(x^v,b^v)=\widetilde{Ric}(x^v,b^c)=\widetilde{Ric}(x^c,y^v)=0.\nonumber
\end{eqnarray}
\end{theorem}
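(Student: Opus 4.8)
The plan is to read the Ricci tensor straight off its definition, summing over an orthonormal basis of the Lie algebra of $TG$ built from lifts. Since $\{u_1,\dots,u_n,b\}$ is orthonormal in $\frak{g}$ and $\widetilde{g}(X^c,Y^c)=g(X,Y)=\widetilde{g}(X^v,Y^v)$ while $\widetilde{g}(X^c,Y^v)=0$, the set $\{u_1^c,\dots,u_n^c,b^c,u_1^v,\dots,u_n^v,b^v\}$ is an orthonormal basis of the $2(n+1)$-dimensional Lie algebra of $TG$. Thus for any two lifts $X,Y$,
\[
\widetilde{Ric}(X,Y)=\sum_{i=1}^n\widetilde{g}(\widetilde{R}(u_i^c,X)Y,u_i^c)+\widetilde{g}(\widetilde{R}(b^c,X)Y,b^c)+\sum_{i=1}^n\widetilde{g}(\widetilde{R}(u_i^v,X)Y,u_i^v)+\widetilde{g}(\widetilde{R}(b^v,X)Y,b^v),
\]
and I would obtain the six stated components by specialising $(X,Y)$ and evaluating every curvature term from \ref{curvature tensor}, using the skew-symmetry $\widetilde{R}(A,B)=-\widetilde{R}(B,A)$ whenever the needed ordering differs from the one recorded there.

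For the two nonzero components $\widetilde{Ric}(x^c,y^c)$ and $\widetilde{Ric}(x^v,y^v)$ with $x,y\in\frak{u}$, the sum splits into four groups according to whether the summation vector is $u_i^c$, $u_i^v$, $b^c$ or $b^v$. In the two $u_i$-groups I would insert the first three lines of \ref{curvature tensor}, take the inner products with $u_i^c$ (respectively $u_i^v$), and sum on $i$ using $\sum_{i=1}^n g(u_i,u_i)=n$ and the identity $\sum_{i=1}^n g(x,u_i)g(u_i,y)=g(x,y)$, valid since $x,y\in\frak{u}=\mathrm{span}\{u_1,\dots,u_n\}$; this yields the term $\frac{5}{4\lambda}\sum_{i=1}^n g(x,u_i)g(u_i,y)$ together with a multiple of $\frac{n}{\lambda}g(x,y)$. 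In the two $b$-groups I would use the last two lines of \ref{curvature tensor}; here the factor $g(b,b)=\lambda$ coming from $\widetilde{g}(\,\cdot\,,b^c)$ and $\widetilde{g}(\,\cdot\,,b^v)$ cancels one power of $\lambda$ and produces precisely the $\lambda$-free constants $-\frac{7}{4}g(x,y)$ in the $c$-case and $-\frac{3}{4}g(x,y)$ in the $v$-case. The discrepancy between the two cases is traced entirely to the asymmetric coefficient $-4g(y,z)$ in $\widetilde{R}(x^c,y^v)z^v$, which has no analogue in $\widetilde{R}(x^c,y^c)z^c$.

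The four mixed components vanish by a parity argument. For $\widetilde{Ric}(x^c,y^v)$ every curvature $\widetilde{R}(e,x^c)y^v$ produced by \ref{curvature tensor} lands in the lift type opposite to that of the summation vector $e$, so each inner product $\widetilde{g}(\,\cdot\,,e)$ is annihilated by $\widetilde{g}(X^c,Y^v)=0$. For $\widetilde{Ric}(x^c,b^c)$, $\widetilde{Ric}(x^c,b^v)$, $\widetilde{Ric}(x^v,b^c)$ and $\widetilde{Ric}(x^v,b^v)$ the fourth line of \ref{curvature tensor} kills the $u_i$-groups outright (the vector $b$ sits in the last curvature slot), while for the remaining $b^c$- and $b^v$-summands I would reduce the surviving curvatures through Lemma~\ref{lemma 1} and $[b,x]=x$ and then eliminate them against $e$ by the orthogonality $\widetilde{g}(x^{c},b^{c})=\widetilde{g}(x^{v},b^{v})=g(x,b)=0$.

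The main obstacle is organisational rather than conceptual: there are $2(n+1)$ summands in each of six components, and each requires selecting the correct formula from \ref{curvature tensor} with the right sign and skew-symmetrisation. The genuinely delicate points are tracking the asymmetric $\widetilde{R}(x^c,y^v)z^v$ coefficient and the $g(b,b)=\lambda$ cancellations in the $b$-groups, since these are exactly what separate the $c$- and $v$-formulas and account for the two $\lambda$-free constants $\frac{7}{4}$ and $\frac{3}{4}$.
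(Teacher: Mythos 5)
Your overall plan --- sum $\widetilde{g}(\widetilde{R}(e,X)Y,e)$ over the orthonormal basis $\{u_1^c,\dots,u_n^c,b^c,u_1^v,\dots,u_n^v,b^v\}$ of the Lie algebra of $TG$ --- is exactly what the paper does (its proof just says to apply Lemma \ref{lemma 1} in the Ricci formula), and your treatment of the five vanishing components and of $\widetilde{Ric}(x^v,y^v)$ goes through as described. The problem is the first displayed line. To evaluate $\sum_i\widetilde{g}(\widetilde{R}(u_i^v,x^c)y^c,u_i^v)$ you need $\widetilde{R}(x^c,y^v)z^c$, and the entry recorded in \ref{curvature tensor}, namely $\frac{1}{4\lambda}(g(x,z)y^v-g(y,z)x^v)$, is inconsistent with the connection of Lemma \ref{lemma 1}: computing $\widetilde{\nabla}_{x^c}\widetilde{\nabla}_{y^v}z^c-\widetilde{\nabla}_{y^v}\widetilde{\nabla}_{x^c}z^c$ with $[x^c,y^v]=0$ gives $-\frac{g(y,z)}{4\lambda}x^v+\frac{g(x,z)}{\lambda}y^v=\frac{1}{4\lambda}\bigl(4g(x,z)y^v-g(y,z)x^v\bigr)$. (The printed entry also violates the pair symmetry $\widetilde{g}(\widetilde{R}(x^c,y^v)z^v,w^c)=\widetilde{g}(\widetilde{R}(z^v,w^c)x^c,y^v)$ when tested against the --- correct --- formula for $\widetilde{R}(x^c,y^v)z^v$.) If you feed the printed entry into your sum, the $u_i^v$-group contributes $\frac{1-n}{4\lambda}g(x,y)$ and you land on $-(\frac{7}{4}+\frac{5n}{4\lambda})g(x,y)+\frac{5}{4\lambda}\sum_ig(x,u_i)g(u_i,y)$, not the stated $-(\frac{7}{4}+\frac{2n}{\lambda})g(x,y)+\cdots$; with the corrected entry that group contributes $\frac{1-4n}{4\lambda}g(x,y)$ and the theorem's coefficient $\frac{2n}{\lambda}$ does come out. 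So for this one component you must recompute the curvature from Lemma \ref{lemma 1} rather than quote \ref{curvature tensor}; as written, your proposal does not reproduce the first line of the theorem.

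A smaller point: your diagnosis that the gap between the constants $\frac{7}{4}$ and $\frac{3}{4}$ is traced entirely to the asymmetric coefficient $-4g(y,z)$ in $\widetilde{R}(x^c,y^v)z^v$ does not survive the correction above --- that coefficient then enters the $c$- and $v$-computations symmetrically, through $\widetilde{R}(u_i^v,x^c)y^c$ and $\widetilde{R}(u_i^c,x^v)y^v$ respectively. The unit difference actually comes from the $b^v$-summand: $\widetilde{R}(x^c,b^v)y^c=\frac{3}{4}\frac{g(x,y)}{\lambda}b^v$ contributes $-\frac{3}{4}g(x,y)$ to $\widetilde{Ric}(x^c,y^c)$, while $\widetilde{R}(x^v,b^v)y^v=-\frac{1}{4}\frac{g(x,y)}{\lambda}b^v$ contributes $+\frac{1}{4}g(x,y)$ to $\widetilde{Ric}(x^v,y^v)$.
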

\begin{proof}
It is easy to apply lemma \ref{lemma 1} in Ricci tensor formula.
\end{proof}

\section{\textbf{Sectional and Ricci Curvatures of $TG$ Where $G$ Belongs to $\mathcal{G}_2$}}
In this section we consider the family of Lie groups with one-dimensional commutator group, $\mathcal{G}_2$. Let $G\in\mathcal{G}_2$ be a Lie group equipped with any left invariant Riemannian metric $g$, and $G'$ be its one-dimensional commutator group. Suppose that $\frak{g}$ and $\frak{g}'$ are the Lie algebra and the one-dimensional commutator algebra of $G$ and $G'$, respectively, and assume that $\mathrm{e}$ is any unit vector belongs to the ideal $\frak{g}'$ with respect to the inner product induced by $g$ on $\frak{g}$. Let $\Gamma$ be the orthogonal subspace to the vector $\mathrm{e}$ with respect to $g$ and $\phi:\Gamma\longrightarrow\mathbb{R}$ be the linear function defined by $[x,\rm{e}]=\phi(x)\rm{e}$, for any $x\in\Gamma$. Then there exists a unique element $a\in\Gamma$ such that $\phi(x)=g(a,x)$, for each $x\in\Gamma$. So we have
\begin{equation}\label{Lie bracket a}
    [x,\rm{e}]=g(a,x)\rm{e}, \ \ \ \ \ \ \forall x\in\Gamma.
\end{equation}
On the other hand there exists a skew-symmetric bilinear form $B$ on $\Gamma$ such that $[x,y]=B(x,y)\rm{e}$ for any $x,y\in\Gamma$. Therefore there is a unique skew-adjoint linear transformation $f:\Gamma\longrightarrow\Gamma$ such that $B(x,y)=g(f(x),y)$ for each $x,y\in\Gamma$. So we have:
\begin{equation}\label{Lie bracket f}
    [x,y]=g(f(x),y)\rm{e}, \ \ \ \ \ \ \forall x,y\in\Gamma.
\end{equation}

\begin{lem}\label{lemma 3}
First of all we derive the sectional curvature of based Lie group.
For the sectional curvature of $(G,g)$ we have:
\begin{eqnarray}
  K(x,y) &=& -\frac{3}{4}g(f(x),y)^2,\nonumber  \\
  K(x,\rm{e}) &=& \frac{1}{4}\|f(x)\|^2-g(a,x)^2,
\end{eqnarray}
where $\{x,y\}$ is an orthonormal set and $\|f(x)\|^2:=g(f(x),f(x))$.
\end{lem}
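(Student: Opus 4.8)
The plan is to obtain the Levi-Civita connection of $(G,g)$ first, and then feed it into the curvature formula $R(x,y)z=\nabla_x\nabla_y z-\nabla_y\nabla_x z-\nabla_{[x,y]}z$. Because $g$ is left invariant and I may evaluate everything on left invariant vector fields, the Koszul formula collapses to
\[
2g(\nabla_u v,w)=g([u,v],w)-g([v,w],u)+g([w,u],v),
\]
the three derivative-of-inner-product terms vanishing since inner products of left invariant fields are constant. I would then split the arguments according to the orthogonal decomposition $\frak{g}=\Gamma\oplus\mathbb{R}\mathrm{e}$ and use only \eqref{Lie bracket a}, \eqref{Lie bracket f}, together with the skew-adjointness of $f$ (so that $g(f(x),x)=0$ and $g(f(x),y)=-g(x,f(y))$), to read off all the Christoffel data.

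Carrying out the Koszul computation, I expect to find, for $x,y\in\Gamma$,
\[
\nabla_x y=\tfrac12 g(f(x),y)\mathrm{e},\quad \nabla_x\mathrm{e}=-\tfrac12 f(x),\quad \nabla_{\mathrm{e}}x=-\tfrac12 f(x)-g(a,x)\mathrm{e},\quad \nabla_{\mathrm{e}}\mathrm{e}=a.
\]
The feature that makes these clean is that every bracket in \eqref{Lie bracket a} and \eqref{Lie bracket f} is a multiple of $\mathrm{e}$, so pairing a bracket with a vector of $\Gamma$ kills it; this forces $\nabla_x y$ to be purely a multiple of $\mathrm{e}$, and only the skew term $g(f(x),y)$ survives. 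As a consistency check I would verify $\nabla_u v-\nabla_v u=[u,v]$ on each pair, which pins down the signs.

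Finally I would substitute into the curvature tensor. For orthonormal $x,y\in\Gamma$, using $\nabla_y y=0$ (from $g(f(y),y)=0$) and $[x,y]=g(f(x),y)\mathrm{e}$, the computation of $R(x,y)y$ reduces to the $\nabla_y\nabla_x y$ and $\nabla_{[x,y]}y$ terms, and after taking $g(R(x,y)y,x)$ and invoking $g(f(y),x)=-g(f(x),y)$ the coefficients $\tfrac14$ and $\tfrac12$ combine to give $-\tfrac34 g(f(x),y)^2$. For $K(x,\mathrm{e})$ with $x$ a unit vector of $\Gamma$, I would compute $R(x,\mathrm{e})\mathrm{e}=\nabla_x a-\nabla_{\mathrm{e}}\nabla_x\mathrm{e}-g(a,x)\nabla_{\mathrm{e}}\mathrm{e}$; the two $\mathrm{e}$-components (one from $\nabla_x a$, one from $\nabla_{\mathrm{e}}f(x)$) cancel, leaving $-\tfrac14 f^2(x)-g(a,x)a$, so that $g(R(x,\mathrm{e})\mathrm{e},x)=\tfrac14\|f(x)\|^2-g(a,x)^2$ after using $g(f^2(x),x)=-\|f(x)\|^2$. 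The main obstacle is purely bookkeeping: tracking the $\Gamma$- and $\mathrm{e}$-components through the second covariant derivatives and applying skew-adjointness with the correct sign at each step, since a single slip in $g(f(x),y)=-g(x,f(y))$ would alter the final constants $-\tfrac34$ and $\tfrac14$.
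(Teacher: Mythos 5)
Your proposal is correct and follows essentially the same route as the paper: obtain the Levi-Civita connection on $\frak{g}$ (your Koszul computation reproduces exactly the formulas the paper quotes from Kaiser) and then substitute into $R(x,y)z$ and the sectional curvature formula; your intermediate values $R(x,y)y$ and $R(x,\mathrm{e})\mathrm{e}=-\tfrac14 f^2(x)-g(a,x)a$ and the final constants $-\tfrac34$ and $\tfrac14$ all check out. The only difference is that you derive the connection from scratch rather than citing \cite{Kaiser}, which makes the argument self-contained.
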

\begin{proof}
With the above notations for any $x,y\in\Gamma$ the Levi-Civita connection of the Riemannian manifold $(G,g)$ satisfies the following equations (see \cite{Kaiser}).
\begin{eqnarray}
  &&\nabla_{\rm{e}}\rm{e}=a, \ \ \ \ \ \ \ \ \ \ \ \ \ \ \ \nabla_{\rm{e}}x=-\frac{1}{2}f(x)-g(x,a)\rm{e},\nonumber  \\
  &&\nabla_x\rm{e}=-\frac{1}{2}f(x), \ \ \ \ \ \ \ \nabla_xy=\frac{1}{2}g(f(x),y)\rm{e}.
\end{eqnarray}
Now it is sufficient to use sectional curvature formula.
\end{proof}

Similar to the previous section in this step we compute the Levi-Civita connection of tangent Lie group.

\begin{lem}\label{lemma 4}
Let $G\in\mathcal{G}_2$, $\frak{g}$ be its Lie algebra and $g$ be any left invariant Riemannian metric on $G$. Suppose that $\widetilde{g}$ is the lifted Riemannian metric on $TG$ as considered in the section \ref{introduction} and $\widetilde{\nabla}$ is the Levi-Civita connection of $(TG,\widetilde{g})$. Then for any $x,y\in\Gamma$ we have:
\begin{eqnarray}
  && \widetilde{\nabla}_{\rm{e}^c}\rm{e}^c=\widetilde{\nabla}_{\rm{e}^v}\rm{e}^v=a^c, \ \ \ \ \ \widetilde{\nabla}_{\rm{e}^c}\rm{e}^v=\widetilde{\nabla}_{\rm{e}^v}\rm{e}^c=\frac{1}{2}a^v, \nonumber\\
  && \widetilde{\nabla}_{\rm{e}^c}x^c=(-\frac{1}{2}f(x)-g(x,a)\rm{e})^c, \ \ \ \ \ \widetilde{\nabla}_{x^c}\rm{e}^c=-\frac{1}{2}f(x)^c,\nonumber\\
  && \widetilde{\nabla}_{x^v}\rm{e}^v=\widetilde{\nabla}_{\rm{e}^v}x^v=-\frac{1}{2}(f(x)+g(x,a)\rm{e})^c, \ \ \ \ \ \ \widetilde{\nabla}_{\rm{e}^v}x^c=(-\frac{1}{2}f(x)-g(x,a)\rm{e})^v,\\
  && \widetilde{\nabla}_{x^c}\rm{e}^v=\widetilde{\nabla}_{\rm{e}^c}x^v=\frac{1}{2}f(x)^v, \ \ \ \ \ \ \widetilde{\nabla}_{x^v}\rm{e}^c=(\frac{1}{2}f(x)+g(a,x)\rm{e})^v,\nonumber\\
  && \widetilde{\nabla}_{x^c}y^c=\frac{1}{2}g(f(x),y)\rm{e}^c, \ \ \ \ \ \ \widetilde{\nabla}_{x^c}y^v=\widetilde{\nabla}_{x^v}y^c=\frac{1}{2}g(f(x),y)\rm{e}^v,\ \ \ \ \ \ \widetilde{\nabla}_{x^v}y^v=0.\nonumber
\end{eqnarray}
\end{lem}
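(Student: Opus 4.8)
The plan is to reduce everything to the four master formulas \ref{tangent bundle connection}, which express $\widetilde\nabla$ on complete and vertical lifts in terms of three ingredients living on the base: the Levi-Civita connection $\nabla$ of $(G,g)$, the Lie bracket of $\frak g$, and the metric adjoints $\mathrm{ad}^*$. Two of these are already available — $\nabla$ is recorded (following Kaiser) in the proof of Lemma \ref{lemma 3}, and the brackets are \ref{Lie bracket a} and \ref{Lie bracket f} — so, exactly as in Lemma \ref{lemma 1}, the only genuinely new work is to compute the operators $\mathrm{ad}_v^*$ on the pieces of the decomposition $\frak g=\Gamma\oplus\mathbb R\mathrm{e}$, and then to substitute.

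The key simplification I would exploit is that, because $G\in\mathcal G_2$ has one-dimensional commutator algebra $\frak g'=\mathbb R\mathrm{e}$, every operator $\mathrm{ad}_v=[v,\cdot]$ has image inside $\mathbb R\mathrm{e}$; indeed \ref{Lie bracket a} and \ref{Lie bracket f} give $[v,z]=g(\mathrm{e},[v,z])\,\mathrm{e}$ since $\|\mathrm{e}\|=1$. Plugging this into the defining relation $g(\mathrm{ad}_v^*w,z)=g(w,[v,z])$ yields $g(\mathrm{ad}_v^*w,z)=g(w,\mathrm{e})\,g(\mathrm{e},[v,z])$ for all $z$, whence $\mathrm{ad}_v^*w=g(w,\mathrm{e})\,\mathrm{ad}_v^*\mathrm{e}$. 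In particular $\mathrm{ad}_v^*w=0$ whenever $w\in\Gamma$, which settles $\mathrm{ad}_x^*y$ and $\mathrm{ad}_{\mathrm{e}}^*x$ in one stroke, and the whole computation collapses to the two vectors $\mathrm{ad}_x^*\mathrm{e}$ and $\mathrm{ad}_{\mathrm{e}}^*\mathrm{e}$. Testing $g(\mathrm{ad}_x^*\mathrm{e},z)=g(\mathrm{e},[x,z])$ against $z\in\Gamma$ and $z=\mathrm{e}$, and using the skew-adjointness of $f$, gives $\mathrm{ad}_x^*\mathrm{e}=f(x)+g(a,x)\mathrm{e}$, and similarly $\mathrm{ad}_{\mathrm{e}}^*\mathrm{e}=-a$.

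With these in hand the remainder is bookkeeping: I would run the two slots of \ref{tangent bundle connection} over the three cases (both arguments in $\Gamma$; one equal to $\mathrm{e}$; both equal to $\mathrm{e}$) and over each lift-type $cc,vv,cv,vc$, in every case forming the appropriate linear combination of $\nabla$, the bracket and $\mathrm{ad}^*$. Several entries cancel pleasantly; for example $\widetilde\nabla_{x^v}y^v=(\nabla_xy-\frac{1}{2}[x,y])^c$ vanishes because $\nabla_xy=\frac{1}{2}g(f(x),y)\mathrm{e}=\frac{1}{2}[x,y]$, while $\widetilde\nabla_{\mathrm{e}^c}\mathrm{e}^v=(\nabla_{\mathrm{e}}\mathrm{e}+\frac{1}{2}\mathrm{ad}_{\mathrm{e}}^*\mathrm{e})^v=(a-\frac{1}{2}a)^v=\frac{1}{2}a^v$. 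Assembling all the cases reproduces the stated table.

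The step I expect to demand the most care is not conceptual but clerical: getting the mixed adjoint $\mathrm{ad}_x^*\mathrm{e}=f(x)+g(a,x)\mathrm{e}$ exactly right, since it is the one place where the $\Gamma$- and $\mathrm{e}$-directions genuinely couple, and then attaching the correct superscript ($c$ or $v$) to each resulting vector according to which of the four formulas of \ref{tangent bundle connection} is being applied. As a safeguard I would verify the torsion-free identity $\widetilde\nabla_XY-\widetilde\nabla_YX=[X,Y]$ on a few mixed pairs of lifts, evaluating the right-hand side through \ref{Lie bracket}; this independently pins down the signs.
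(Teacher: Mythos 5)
Your proposal is correct and is essentially the paper's own proof: the paper likewise just records $\mathrm{ad}^*_{\mathrm{e}}\mathrm{e}=-a$, $\mathrm{ad}^*_x\mathrm{e}=g(a,x)\mathrm{e}+f(x)$, $\mathrm{ad}^*_{\mathrm{e}}x=\mathrm{ad}^*_xy=0$ and substitutes into \ref{tangent bundle connection}, and your derivation of these adjoints (via the observation that $\mathrm{ad}^*_vw=g(w,\mathrm{e})\,\mathrm{ad}^*_v\mathrm{e}$) is the same computation made explicit. One caveat: carrying the substitution out honestly yields $\widetilde{\nabla}_{x^c}\mathrm{e}^v=-\frac{1}{2}f(x)^v$, $\widetilde{\nabla}_{\mathrm{e}^c}x^v=-\frac{1}{2}g(x,a)\mathrm{e}^v$ and $\widetilde{\nabla}_{x^v}\mathrm{e}^c=\frac{1}{2}g(a,x)\mathrm{e}^v$, so your closing claim that the stated table is reproduced exactly should be tempered --- those three printed entries appear to be typographical errors in the lemma rather than consequences of the method, and your proposed torsion-free cross-check would in fact detect them.
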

\begin{proof}

Easily we can see
\begin{eqnarray}
ad^*_ee=-a, \ \ \ \ ad^*_xe=g(a,x)e+f(x), \ \ \ \ ad^*_ex=ad^*_xy=0.
\end{eqnarray}
Now using lemma \ref{tangent bundle connection} completes the proof.
\end{proof}

\begin{lem}\label{lemma 5}
With the assumptions of Lemma \ref{lemma 4} for the curvature tensor $\widetilde{R}$ of the Riemannian manifold $(TG,\widetilde{g})$ we have:
\begin{eqnarray}
  && \widetilde{R}(x^c,y^c)z^c=(-\frac{1}{4}g(f(y),z)f(x)+\frac{1}{4}g(f(x),z)f(y)+\frac{1}{2}f(z)+g(z,a)\rm{e})^c, \nonumber\\
  && \widetilde{R}(x^c,y^c)z^v=(-\frac{1}{4}g(f(y),z)f(x)+\frac{1}{4}g(f(x),z)f(y)-\frac{1}{2}g(f(x),y)f(z))^v, \nonumber\\
  && \widetilde{R}(x^c,y^v)y^v=\frac{1}{4}g(f(x),y)f(x)^v+\frac{1}{2}g(f(x),y)(f(y)+g(y,a)\rm{e})^c, \nonumber\\
  && \widetilde{R}(x^v,y^v)y^v=0, \\
  && \widetilde{R}(x^c,{\rm{e}}^v){\rm{e}}^v=(\frac{1}{2}g(f(x),a){\rm{e}}-\frac{1}{4}(f^2(x)+g(f(x),a){\rm{e}})-g(a,x)a)^c, \nonumber\\
  && \widetilde{R}(x^v,{\rm{e}}^c){\rm{e}}^c=(\frac{1}{2}g(f(x),a){\rm{e}}-\frac{1}{4}f^2(x)-g(a,x)a)^v, \nonumber\\
  && \widetilde{R}(x^v,{\rm{e}}^v){\rm{e}}^v=(\frac{1}{2}g(f(x),a){\rm{e}}-\frac{1}{4}f^2(x)-\frac{1}{2}g(f(x),a){\rm{e}}+\frac{1}{4}g(x,a)a)^v.\nonumber \nonumber
\end{eqnarray}
\end{lem}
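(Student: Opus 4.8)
The plan is to evaluate the defining expression $\widetilde{R}(X,Y)Z=\widetilde{\nabla}_X\widetilde{\nabla}_YZ-\widetilde{\nabla}_Y\widetilde{\nabla}_XZ-\widetilde{\nabla}_{[X,Y]}Z$ on each of the listed triples of lifts, feeding in three independent pieces of data: the connection formulas of Lemma \ref{lemma 4}, the bracket relations \ref{Lie bracket} for complete and vertical lifts, and the structural brackets \ref{Lie bracket a} and \ref{Lie bracket f} of $\frak{g}$. Since every field appearing is a left invariant field on $TG$ and every coefficient that arises (such as $g(f(x),y)$, $g(x,a)$, or $g(f(x),a)$) is a genuine constant, these scalars factor straight through $\widetilde{\nabla}$, so each of the three terms in the definition reduces to a single further application of one line of Lemma \ref{lemma 4}.

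Concretely, for a triple such as $(x^c,y^c,z^c)$ I would first compute the inner derivatives $\widetilde{\nabla}_{y^c}z^c$ and $\widetilde{\nabla}_{x^c}z^c$ from Lemma \ref{lemma 4}; each is a constant multiple of a single lift, for example $\frac{1}{2}g(f(y),z)\mathrm{e}^c$. Applying the outer connection then produces terms in $f(x)^c$, $f(y)^c$, and so on. For the bracket term I would use \ref{Lie bracket} to rewrite $[x^c,y^c]=[x,y]^c$, $[x^c,y^v]=[x,y]^v$, and $[x^v,y^v]=0$, then \ref{Lie bracket a}--\ref{Lie bracket f} to turn $[x,y]$ into $g(f(x),y)\mathrm{e}$ and $[x,\mathrm{e}]$ into $g(a,x)\mathrm{e}$, after which a final use of Lemma \ref{lemma 4} disposes of that term. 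The mixed cases such as $(x^c,\mathrm{e}^v,\mathrm{e}^v)$ and $(x^v,\mathrm{e}^c,\mathrm{e}^c)$ are handled identically, the only new feature being that intermediate results are lifts of vectors like $f(x)$ or $a$ that again lie in $\Gamma$, so the same four connection formulas apply recursively.

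The one point that requires genuine care — and where I expect most of the work and the risk of sign errors to concentrate — is the bookkeeping forced by the asymmetry of the connection. The formulas of Lemma \ref{lemma 4} are invariant neither under interchange of the superscripts $c$ and $v$ nor under interchange of the two arguments (for instance $\widetilde{\nabla}_{\mathrm{e}^c}x^c\neq\widetilde{\nabla}_{x^c}\mathrm{e}^c$), so each nested derivative must be matched to exactly the right line of the lemma, and one must track at every stage whether an intermediate vector is a complete or a vertical lift. A second recurring subtlety is that differentiating a term built from $f(x)$ reintroduces $f$, producing the iterated operator $f^2(x)=f(f(x))$ that appears in the last three displayed identities. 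Here the skew-adjointness of $f$, namely $g(f(x),y)=-g(x,f(y))$, together with the decomposition $\frak{g}=\Gamma\oplus\mathbb{R}\mathrm{e}$ used to split every intermediate lift into its $\Gamma$-part and its $\mathrm{e}$-part, is what lets the many raw terms collapse to the compact right-hand sides stated above. Once this substitution is organized case by case, each identity follows by direct computation.
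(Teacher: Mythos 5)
Your plan is exactly the paper's proof, which simply states that the lemma is an immediate consequence of Lemma \ref{lemma 4}: substitute the connection formulas into the definition of $\widetilde{R}$, use the lift bracket relations \ref{Lie bracket} together with \ref{Lie bracket a}--\ref{Lie bracket f} for the $\widetilde{\nabla}_{[X,Y]}Z$ term, and exploit left invariance to pull the constant coefficients through. The details you flag (the asymmetry of the connection in its two arguments and the appearance of $f^2$) are the right places to be careful, and carrying out your computation will in fact reveal a missing overall factor of $g(f(x),y)$ on the last two terms of the stated formula for $\widetilde{R}(x^c,y^c)z^c$.
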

\begin{proof}
It is an immediate consequence of previous lemma.
\end{proof}

The following theorem shows that, similar to $\mathcal{G}_1$, the tangent bundle of Lie groups in $\mathcal{G}_2$ admits positive, negative and zero sectional curvatures.

\begin{theorem}
Consider the same assumptions of lemma \ref{lemma 4}. Then for any $G\in \mathcal{G}_1$, $(TG,\widetilde{g})$ at any point, admits positive, negative and zero sectional curvatures.
\end{theorem}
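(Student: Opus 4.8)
The plan is to exploit left-invariance to reduce the claim to a statement at the identity. Since $\widetilde{g}$ is left invariant, the sectional curvature of a plane spanned by two left-invariant fields is the same at every point of $TG$, so it suffices to exhibit, among the planes spanned by complete and vertical lifts, three planes realizing positive, negative and zero curvature. Because $G\in\mathcal{G}_2$ has a nontrivial one-dimensional commutator, its Lie algebra is non-abelian, so the data $(f,a)$ of Lemma \ref{lemma 4} cannot both vanish: either $f\neq 0$ or $a\neq 0$. I would keep this dichotomy in mind throughout, and read off each sectional curvature as $\widetilde{K}(u,v)=\widetilde{g}(\widetilde{R}(u,v)v,u)$ for an appropriate $\widetilde{g}$-orthonormal pair $u,v$ of lifts, using the formulas of Lemma \ref{lemma 5}.

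For the zero curvature I would take a unit vector $x\in\Gamma$ and use the plane spanned by the $\widetilde{g}$-orthonormal, linearly independent pair $x^c,x^v$. Specializing the expression for $\widetilde{R}(x^c,y^v)y^v$ to $y=x$ and invoking skew-adjointness of $f$ (so $g(f(x),x)=0$) kills both terms, giving $\widetilde{R}(x^c,x^v)x^v=0$ and hence $\widetilde{K}(x^c,x^v)=0$. For the positive curvature I would use the plane spanned by $x^v$ and $\mathrm{e}^v$, which is orthonormal for unit $x\in\Gamma$. In the expression for $\widetilde{R}(x^v,\mathrm{e}^v)\mathrm{e}^v$ the two multiples of $\mathrm{e}$ cancel, and pairing the result with $x^v$, together with $g(f^2(x),x)=-\|f(x)\|^2$, gives $\widetilde{K}(x^v,\mathrm{e}^v)=\frac{1}{4}\bigl(\|f(x)\|^2+g(a,x)^2\bigr)\ge 0$. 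This is strictly positive for a suitable $x$: if $a\neq 0$ take $x=a/\|a\|$, and if $a=0$ (hence $f\neq 0$) take any $x$ with $f(x)\neq 0$.

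The negative curvature is the step I expect to require the most care, since it is where the dichotomy is genuinely needed. When $f\neq 0$ I would use the plane spanned by orthonormal $x^c,y^v$ with $x,y\in\Gamma$: pairing $\widetilde{R}(x^c,y^v)y^v$ with $x^c$ annihilates the vertical summand, and skew-adjointness collapses the remainder to $\widetilde{K}(x^c,y^v)=-\frac{1}{2}g(f(x),y)^2$; choosing $x$ with $f(x)\neq 0$ and $y=f(x)/\|f(x)\|$ (which is a unit vector orthogonal to $x$, as $g(f(x),x)=0$) makes this strictly negative. When $f=0$, non-abelianness forces $a\neq 0$, and the computation for the plane $x^v,\mathrm{e}^c$, starting from the formula for $\widetilde{R}(x^v,\mathrm{e}^c)\mathrm{e}^c$, yields $\widetilde{K}(x^v,\mathrm{e}^c)=\frac{1}{4}\|f(x)\|^2-g(a,x)^2=-g(a,x)^2$, which is strictly negative for $x=a/\|a\|$.

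Assembling the three planes shows that all three signs occur, and by left-invariance they occur at every point of $TG$; I would also check that the lowest-dimensional case ($\dim G=2$, where $\Gamma$ is one-dimensional and $f=0$, $a\neq 0$) is already covered by the three explicit planes above. The only real obstacle is the bookkeeping that, for each sign, there is a vector $x$ making the relevant quadratic form nonzero; this is precisely what the non-degeneracy coming from non-abelianness guarantees, so the argument is essentially a careful case split rather than a new computation.
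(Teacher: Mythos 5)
Your proposal is correct and follows essentially the same route as the paper: both evaluate $\widetilde{K}$ on planes spanned by complete and vertical lifts via the curvature formulas of Lemma \ref{lemma 5} and read off the three signs (you use $x^c,x^v$ for the zero plane where the paper uses $x^v,y^v$, a cosmetic difference). The one substantive difference is that you are more careful about strictness: the paper declares $\widetilde{K}(x^c,\mathrm{e}^v)=-\tfrac{1}{4}g(f^2(x),x)-g(a,x)^2=\tfrac{1}{4}\|f(x)\|^2-g(a,x)^2$ to be ``obviously negative'' (it need not be) and never checks that the positive and negative values are actually attained, whereas your dichotomy $f\neq 0$ versus $a\neq 0$, forced by non-abelianness, supplies exactly the non-vanishing argument the paper's proof glosses over.
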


\begin{proof}
Lemma \ref{lemma 5} and some computations show that for $x,y \in \Gamma$ we have,
\begin{eqnarray}
&&\widetilde{K}(x^c,y^c) = K(x,y)=-\frac{3}{4}g(f(x),y)^2, \ \ \ \  \widetilde{K}(x^c,y^v) =-\frac{1}{2}g(f(x),y)^2,\\
&&\widetilde{K}(x^c,\rm{e}^c) = K(x,\rm{e})=-\frac{1}{4}\|f(x)\|^2, \ \ \ \ \widetilde{K}(x^c,\rm{e}^v) = \widetilde{K}(x^v,\rm{e}^c) = -\frac{1}{4}g(f^2(x),x)-g(a,x)^2, \nonumber
\end{eqnarray}
which obviously are negative. Similarly we can see $\widetilde{K}(x^v,\rm{e}^v) =\frac{1}{4}\|f(x)\|^2+\frac{1}{4}g(x,a)^2$ that is positive and $\widetilde{K}(x^v,y^v) = 0$, so the proof is completed.
\end{proof}
Although the previous theorem showed that $(TG,\widetilde{g})$ at any point, admits positive, negative and zero sectional curvatures but the following theorem proves that $(TG,\widetilde{g})$ is of non-positive Ricci curvature.

\begin{theorem}
Let $G\in \mathcal{G}_1$, then $(TG,\widetilde{g})$ is of non-positive Ricci curvature.
\end{theorem}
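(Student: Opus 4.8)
The plan is to compute the Ricci tensor $\widetilde{\mathrm{Ric}}$ of $(TG,\widetilde{g})$ explicitly as a quadratic form in a suitably adapted orthonormal basis and then verify that this form is negative semi-definite. First I would fix an orthonormal basis $\{e_1,\ldots,e_n\}$ of $\Gamma$ in which the skew-adjoint operator $f$ is in canonical block form, so that $f$ acts on each pair $\{e_{2k-1},e_{2k}\}$ by $f(e_{2k-1})=\mu_k e_{2k}$, $f(e_{2k})=-\mu_k e_{2k-1}$ (and $f(e_i)=0$ on its kernel). Together with $\mathrm{e}$ this yields an orthonormal basis of $\frak{g}$ whose complete and vertical lifts $\{e_i^c,\mathrm{e}^c,e_i^v,\mathrm{e}^v\}$ form an orthonormal basis of the Lie algebra of $TG$ with respect to $\widetilde{g}$. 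In this basis the quantities occurring in Lemma \ref{lemma 5}, such as $\|f(e_i)\|^2=\mu_k^2$ and $g(f(e_i),e_j)$, become explicit, and one has $\sum_i\|f(e_i)\|^2=2\sum_k\mu_k^2$ together with $\sum_i g(a,e_i)^2=\|a\|^2$, since $a\in\Gamma$.

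Next I would extend Lemma \ref{lemma 5} to the full curvature tensor. The listed identities record only terms of the form $\widetilde{R}(X,Y)Y$, which suffice for sectional curvature; for the Ricci trace $\widetilde{\mathrm{Ric}}(V,W)=\sum_\alpha\widetilde{g}(\widetilde{R}(E_\alpha,V)W,E_\alpha)$ I need the polarized terms $\widetilde{R}(E_\alpha,E_\beta)E_\gamma$ with distinct arguments. These follow directly from the connection formulas of Lemma \ref{lemma 4} through $\widetilde{R}(X,Y)Z=\widetilde{\nabla}_X\widetilde{\nabla}_Y Z-\widetilde{\nabla}_Y\widetilde{\nabla}_X Z-\widetilde{\nabla}_{[X,Y]}Z$ and the bracket relations \eqref{Lie bracket}. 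Tracing against the orthonormal basis then produces every component $\widetilde{\mathrm{Ric}}(E_\alpha,E_\beta)$, which I would organize into blocks indexed by $(c,c)$, $(v,v)$, $(c,v)$ and by $\Gamma$ versus $\mathrm{e}$.

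Two structural simplifications should make the final verification tractable. First, I expect the mixed Ricci components between complete and vertical lifts, and between $\Gamma$-directions and the $\mathrm{e}$-direction, to vanish by the same parity and skew-adjointness cancellations that produced $\widetilde{K}(x^v,y^v)=0$ and the vanishing curvature terms in Lemma \ref{lemma 5}; this block-diagonalizes the Ricci matrix. Second, within each block the canonical form of $f$ decouples the $2\times 2$ pieces, reducing the problem to a short list of small symmetric matrices whose entries are polynomials in the $\mu_k$ and the coordinates of $a$, where negative semi-definiteness can be checked by Cauchy--Schwarz.

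The main obstacle will be the $\mathrm{e}^v$-direction, where the sectional curvatures $\widetilde{K}(e_i^v,\mathrm{e}^v)=\tfrac14\|f(e_i)\|^2+\tfrac14 g(a,e_i)^2$ are strictly positive, so that $\widetilde{\mathrm{Ric}}(\mathrm{e}^v,\mathrm{e}^v)$ receives a positive contribution $\tfrac12\sum_k\mu_k^2$ that no single term dominates; non-positivity therefore cannot be established term by term. I would instead sum the sectional curvatures $\widetilde{K}(\mathrm{e}^v,E_\alpha)$ over the entire basis and show that the negative contributions from the complete lifts $\widetilde{K}(e_i^c,\mathrm{e}^v)$ together with $\widetilde{K}(\mathrm{e}^c,\mathrm{e}^v)$ cancel the positive vertical part, using the skew-adjoint identity $g(f^2(x),x)=-\|f(x)\|^2$ and the orthogonality $a\in\Gamma=\mathrm{e}^\perp$. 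Once every diagonal Ricci entry is shown to be $\le 0$ and the surviving off-diagonal cross terms are bounded within each decoupled block, negative semi-definiteness of $\widetilde{\mathrm{Ric}}$, that is, non-positivity of the Ricci curvature, follows.
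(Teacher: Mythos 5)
Your overall strategy --- computing the Ricci form of $\widetilde{g}$ on the adapted orthonormal basis $\{u_i^c,u_i^v,\mathrm{e}^c,\mathrm{e}^v\}$ and checking its sign --- is essentially the paper's: the paper simply lists $\widetilde{r}(\mathrm{e}^c)$, $\widetilde{r}(\mathrm{e}^v)$, $\widetilde{r}(x^c)$, $\widetilde{r}(x^v)$ and asserts non-positivity, without the canonical form of $f$ or any discussion of off-diagonal blocks. The genuine gap is in your treatment of the $\mathrm{e}^v$-direction, and it cannot be closed the way you propose. By the very identity you invoke, $g(f^2(x),x)=-\|f(x)\|^2$, the quantities $\widetilde{K}(u_i^c,\mathrm{e}^v)=-\tfrac14 g(f^2(u_i),u_i)-g(a,u_i)^2=\tfrac14\|f(u_i)\|^2-g(a,u_i)^2$ are not negative contributions: their $f$-part has the same sign as the $f$-part of $\widetilde{K}(u_i^v,\mathrm{e}^v)=\tfrac14\|f(u_i)\|^2+\tfrac14 g(a,u_i)^2$, so the complete-lift terms reinforce rather than cancel the vertical ones, while $\widetilde{K}(\mathrm{e}^c,\mathrm{e}^v)$ involves only $a$ and contributes nothing in $f$. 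Summing over the basis gives $\widetilde{r}(\mathrm{e}^v)=\tfrac12\sum_i\|f(u_i)\|^2-c\,\|a\|^2$ for some positive constant $c$; this agrees with the paper's own displayed formula, since $-\tfrac12\sum_i g(f^2(u_i),u_i)=+\tfrac12\sum_i\|f(u_i)\|^2$. For $a=0$ and $f\neq0$ (for instance $G$ the three-dimensional Heisenberg group, which lies in $\mathcal{G}_2$) this is strictly positive, so no bookkeeping will make $\widetilde{\mathrm{Ric}}(\mathrm{e}^v,\mathrm{e}^v)\le 0$.

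In short, you have correctly isolated the one delicate direction, but the cancellation you are counting on does not exist, and the same positive term $+\tfrac12\sum_i\|f(u_i)\|^2$ sits, unremarked, in the paper's formulas for $\widetilde{r}(\mathrm{e}^c)$ and $\widetilde{r}(\mathrm{e}^v)$ (likewise $-\tfrac12 g(f^2(x),x)=+\tfrac12\|f(x)\|^2$ in $\widetilde{r}(x^c)$ and $\widetilde{r}(x^v)$), so the concluding assertion of non-positivity does not follow from the displayed computations there either. Before investing in the block-diagonalization and the $2\times 2$ reduction, test the claim on the tangent group of the Heisenberg group, where $\widetilde{r}(\mathrm{e}^v)=\tfrac12\sum_i\|f(u_i)\|^2>0$; as written, neither your plan nor the paper's argument establishes the theorem.
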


\begin{proof}
Suppose that $\frak {g}$ is the Lie algebra of $G$ and $\{u_1,...,u_n\}$ is an arbitrary orthonormal basis of $\Gamma$ such that $\{u_1,...,u_n,\rm{e}\}$ is an orthonormal basis for $\frak{g}$. Let $\widetilde{r}(\widetilde{x})=\widetilde{R}(\widetilde{x},\widetilde{x})$ be the Ricci curvature in direction $\widetilde{x}$. Then lemma \ref{lemma 5} together with some computations show that:
\begin{eqnarray}
&&\widetilde{r}({\rm{e}}^c)=-g(a,a)-\frac{1}{2}\sum_{i=1}^n g(f^2(u_i),u_i)-2\sum_{i=1}^ng(a,u_i)^2, \nonumber \\
&&\widetilde{r}({\rm{e}}^v)=\frac{-1}{4}g(a,a)-\frac{1}{2} \sum_{i=1}^n g(f^2(u_i),u_i)-\sum_{i=1}^ng(a,u_i)^2, \\
&&\widetilde{r}({x}^c)=\frac{-1}{2}g(f^2(x),x)-\frac{3}{2}\sum_{i=1}^n g(f^2(u_i),x)^2-2g(a,x)^2, \nonumber \\
&&\widetilde{r}({x}^v)=\frac{-1}{4}g(f^2(x),x)-\frac{3}{4}\sum_{i=1}^n g(f^2(u_i),x)^2. \nonumber
\end{eqnarray}
So $G$ is of non-positive Ricci curvature.
\end{proof}

{\large{\textbf{Acknowledgment.}}} We are grateful to the office of Graduate Studies of the University of Isfahan for their support. This research was supported by the Center of Excellence for
Mathematics at the University of Isfahan.

\bibliographystyle{amsplain}

\end{document}